\documentclass[12pt]{article}

\usepackage{lineno,hyperref}

\usepackage{epsfig,amsmath,hyperref}
\usepackage[english]{babel}
\usepackage{color}
\usepackage{amsfonts,amssymb,amsmath,amsopn,amsxtra}
\usepackage{mathrsfs}
\usepackage{enumerate}
\usepackage{bbm}
\usepackage{graphicx}
\usepackage{rotating}
\usepackage[normalem]{ulem}
\usepackage{enumerate}

%\pagestyle{empty}

%DEFINITION OF THE MACROS USED
\newcommand{\e}{\mathrm{e}}
\newcommand{\D}{\mathrm{d}}

\newcommand{\CC}{\mathcal{C}}
\newcommand{\N}{\mathbb{N}}
\newcommand{\R}{\mathbb{R}}
\newcommand{\Z}{\mathbb{Z}}

\newcommand{\ac}{\alpha_{\mathrm{crit}}}

\newcommand{\Hm}[1]{\leavevmode{\marginpar{\tiny%
$\hbox to 0mm{\hspace*{-0.5mm}$\leftarrow$\hss}%
\vcenter{\vrule depth 0.1mm height 0.1mm width \the\marginparwidth}%
\hbox to
0mm{\hss$\rightarrow$\hspace*{-0.5mm}}$\\\relax\raggedright #1}}}

\def\softt{{\leavevmode\setbox1=\hbox{t}%
\hbox to \wd1{t\kern-0.6ex{\char039}\hss}}}%cstocs

\newtheorem{claim}{Claim}[section]
\newtheorem{theorem}[claim]{Theorem}
\newtheorem{lemma}[claim]{Lemma}

\newtheorem{proposition}[claim]{Proposition}
\newenvironment{proof}[1][Proof]{\textsl{#1.} }{\ \rule{0.4em}{0.7em}}
%END OF THE DEFINITION

% \newcommand\vl[1]{{\color{red} #1}}

\title{Aharonov and Bohm \emph{vs.} Welsh eigenvalues}

\author{P. EXNER$^{1,2}$ and S. KONDEJ$^{3}$}
\date{\small $^1$Doppler Institute for Mathematical Physics and Applied Mathematics, \\ Czech Technical University in Prague, B\v{r}ehov\'{a} 7, 11519 Prague, Czechia \\ $^2$Nuclear Physics Institute CAS, 25068 \v{R}e\v{z} near Prague, Czechia \\
$^3$Institute of Physics, University of Zielona G\'ora, ul.\ Szafrana 4a, \\ 65246 Zielona G\'ora, Poland \\ \emph{e-mail: exner@ujf.cas.cz, s.kondej@if.uz.zgora.pl}}

\begin{document}

\maketitle

\noindent \textbf{Abstract.}  We consider a class of two-dimensional Schr\"odinger operator with a singular interaction of the $\delta$ type and a fixed strength $\beta$ supported by an infinite family of concentric, equidistantly spaced circles, and discuss what happens below the essential spectrum when the system is amended by an Aharonov-Bohm flux $\alpha\in [0,\frac12]$ in the center. It is shown that if $\beta\ne 0$, there is a critical value $\ac \in(0,\frac12)$ such that the  discrete spectrum has an accumulation point when $\alpha<\ac $, while for $\alpha\ge\ac $ the number of eigenvalues is at most finite, in particular, the discrete spectrum is empty for any fixed $\alpha\in (0,\frac12)$ and $|\beta|$ small enough.

\medskip

\noindent \textbf{Mathematics Subject Classification (2010).} 81Q10, 35J10.

\medskip

\noindent \textbf{Keywords.} Singular Schr\"odinger operator, radial symmetry, discrete \\ spectrum, Aharonov-Bohm flux.

%%%%%%%%%%%%%%%%%%%%%%%%%%%%%%%%%%%%%
\section{Introduction}\label{s:intro}
\setcounter{equation}{0}

Schr\"odinger operators with radially periodic potentials attracted attention because they exhibit interesting spectral properties. It was noted early \cite{Hempel} that the essential spectrum threshold of such an operator coincides with that of the one-dimensional Schr\"odinger operator describing the radial motion. More surprising appeared to be the structure of the essential spectrum which may consist of interlacing intervals of dense point and absolutely continuous nature as was first illustrated using potentials of cosine shape \cite{HHHK}.

While this behavior can be observed in any dimension $\ge 2$, the two-dimensio\-nal case is of a particular interest because here these operators can also have a discrete spectrum below the threshold of the essential one. This fact was first observed in \cite{BEHKMS} and the national pride inspired the authors to refer to this spectrum as to \emph{Welsh eigenvalues}; it was soon established that that their number is infinite if the radially symmetric potential is nonzero and belongs to $L^1_\mathrm{loc}\,$ \cite{Schmidt99}. Moreover, the effect persists if such a regular potential is replaced by a periodic array of $\delta$ interactions or more general singular interactions \cite{EF07, EF08}.

The question addressed in this paper is how are the Welsh eigenvalues influenced by a local magnetic field preserving the rotational symmetry. For simplicity we will choose the simplest setting, the two-dimensional system with $\delta$ potential of a fixed strength $\beta$ supported on a concentric family of circles $\{\CC_{r_{n}}\}_{n\in \N}$ or radii $r_n=d(n+\frac12)\,,\: n=0,1,\dots\,$, with $d>0$. Without the presence of the magnetic field the corresponding Hamiltonian can be symbolically written as
 % -------------- %
$$
H_{\beta } = -\Delta +\beta \sum_{n}\delta (x-\CC_{r_{n}})\,,\quad \beta \in \R\,,
$$
 % -------------- %
which can be given meaning as a self-adjoint operator in $L^2(\R^2)$ as we will recall below. As we have said, the discrete spectrum of $H_\beta$ is infinite \cite{EF08}, which is a direct consequence of the fact that the effective potential in the s-wave component contains the term $-\frac{1}{4r^2}$ producing an infinite number of eigenvalues below $E_\beta:= \inf\sigma_\mathrm{ess}(H_\beta)$.

The magnetic interaction we add is also chosen in the simplest possible way, namely as an Aharonov-Bohm flux $\alpha$ at the origin of the coordinates, measured is suitable units, that gives rise to the magnetic field vanishing outside this point. The corresponding Hamiltonian will be denoted $H_{\alpha,\beta}$ and as we will argue, it is sufficient to consider flux values up to half of the quantum, $\alpha\in(0,\frac12)$. Since singular interactions are involved, it is maybe useful to stress that we consider an Aharonov-Bohm flux alone, without an additional point interactions at origin \emph{\`{a} la} \cite{AT, DS}. It is known that local magnetic fields generally, and Aharonov-Bohm fluxes in particular, can reduce the discrete spectrum, if combined with an effective potential that behaves like $r^{-2}$, on the borderline between short and long range, the effect can be dramatic \cite{KLO}.

We are going to show that in the present model the Aharonov-Bohm field also influences the discrete spectrum but the dependence on the flux value is more complicated. Specifically, we claim that
 % -------------- %
\begin{itemize}
\setlength{\itemsep}{-3pt}
 % -------------- %
\item there is an $\ac (\beta )=\alpha_{\mathrm{crit}} \in (0, \frac12)$ such that for $\alpha \in (0,\ac )$ the discrete spectrum of $H_{\alpha,\beta}$ is infinite  accumulating at the threshold $E_0$, while for $\alpha \in [\ac,\frac12)$ there is at most finite number of eigenvalues below $E_0$,
 % -------------- %
 \item the critical value $\ac (\beta )$ admits the following asymptotics,
 $$
 \ac (\beta )\to \frac 12 - \quad \mathrm{for }\quad \beta \to \pm \infty
 $$
 and
 $$
 \ac (\beta )\to 0 + \quad \mathrm{for }\quad \beta \to 0 \,,
 $$
 % -------------- %
\item for any fixed $\alpha \in (0, \frac12 )$ there exists $\beta_0>0$ such that for any $|\beta|\leq \beta_0$ we have
$\sigma_\D(H_{\alpha ,\beta}) = \emptyset$, and moreover,  $\sigma_\D(H_{\frac12,\beta}) = \emptyset$ holds for any $\beta \in \R$.
 % -------------- %
\end{itemize}
 % -------------- %
These properties will be demonstrated in Sections~\ref{s:discr}~and~\ref{s:finite}; before coming to that, in the next section we introduce properly the Hamiltonian and derive its elementary properties.

%%%%%%%%%%%%%%%%%%%%%%%%%%%%%%%%%%%%%%%%%
\section{Preliminaries} \label{s:prelim}
\setcounter{equation}{0}

We consider a magnetic flux $\phi$ perpendicular to the plane to which the particle is confined and placed at the origin of the coordinates corresponding to the vector potential
 % -------------- %
$$
A(x,y) = \frac{\phi}{2\pi }\left( -\frac{y}{r^2}, \frac{x}{r^2} \right)\,.
$$
 % -------------- %
In the rational units we use the flux quantum is $2\pi$, thus it is natural to introduce $\alpha := \frac{\phi}{2\pi}$. Given this $A$ we define the `free' Aharonov-Bohm  Hamiltonian
 % -------------- %
$$
H_\alpha := (-i\nabla - A)^2\,,\quad D(H_\alpha )= \{f\in L^2 (\R^2):\: (-i\nabla - A)^2 f \in L^2 \}\,,
$$
 % -------------- %
where the domain is sometimes dubbed magnetic Sobolev space. Since the integer part of a given $\alpha$ can be removed by a simple gauge transformation, it is sufficient to consider $\alpha\in(0,1)$ only.

The radial symmetry allows us to describe $H_\alpha$ in terms of the partial wave decomposition. To this aim we introduce unitary operator $U\,:\, L^2 (\R_+, r\mathrm{d}r)\to L^2 (\R_+) $  acting as $Uf (r)= r^{1/2} f(r)$. This  naturally leads to
 % -------------- %
$$
L^{2} (\R^2 )= \bigoplus_{l\in\Z} U^{-1} L^2 (\R_+ ) \otimes S_l\,,
$$
 % -------------- %
where $S_l$ is the $l$-th eigenspace of Laplace operator on the unit circle, and the corresponding decomposition of the Hamiltonian
 % -------------- %
$$
H_{\alpha }= \bigoplus_{l} U^{-1} H_{\alpha, l } U \otimes I_l\,,
$$
 % -------------- %
where $I_l$ is the identity operator on $S_l$ and the radial part is
 % -------------- %
\begin{align}
  H_{\alpha,l}: = & - \frac{d^2}{d^2r} +\frac{1}{r^2}c_{\alpha,l}\,,\quad c_{\alpha,l}:= -\frac{1}{4}+(l+\alpha )^2\,, \label{eq-Hamdecom} \\[.5em]
  D(H_{\alpha,l}) :=& \, \{  f\in L^2 (\R_+)
  \,:\, -f''+\frac{ c_{\alpha, l} }{r^2}f\in L^2 (\R_+)\,, \nonumber \\
  & \quad \lim _{r\to 0^+ }r^{\alpha -1/2}f(r)=0 \,, \: l=0\,, \label{eq-aux} \\
  & \quad \lim _{r\to 0^+ }r^{1-\alpha -1/2}f(r)=0 \,, \: l=-1  \}. \nonumber
 \end{align}
 % -------------- %
We recall that this operator describes a `pure' Aharonov-Bohm field without an additional singular interaction at the origin \cite{AT, DS}. This corresponds to the choice of $H_{\alpha, l}\,,\, l=0,-1$, as appropriate self-adjoint extensions of the operator $- \frac{d^2}{d^2r} +\frac{1}{r^2}c_{\alpha, l}$ restricted to $C_0^\infty(\R_+)$. For all the other values of $l$ the centrifugal term ensures the essential self-adjointness, here we choose the conditions which exclude the more singular of the two solutions at the origin, $r^{1/2}K_\alpha (\kappa r)$ and $r^{1/2}K_{1-\alpha} (\kappa r)$, respectively.

In the next step we consider the \emph{$\delta$ interaction supported by concentric circles}; we amend the system governed by $H_\alpha $ by a singular radially periodic potential supported by concentric circles $\CC_{r_n}$ of the radii $r_n =d(n+\frac12)$, $\:d>0$, the strength of which is characterized by a nonzero coupling constant $\beta\in \R$. Since the radial symmetry is preserved, the resulting Hamiltonian can be again expressed in terms of its partial-wave components,
 % -------------- %
 \begin{equation} \label{eq-Hammain}
 H_{\alpha; \beta } =  \bigoplus_{l} U^{-1} H_{\alpha;\beta , l } U \otimes I_l\,,
 \end{equation}
 % -------------- %
where 
 % -------------- %
\begin{align} \label{eq-domain1}
D(H_{\alpha;\beta,l}) := & \{ f,\in W^{2, 2} (\R^2  \setminus \cup_{n\in \N} \CC_{r_n} ): \:
f\: \mathrm{satisfies}\: (\ref{eq-aux}) \\ \label{eq-domain2} & \mathrm{and}\;
\partial_r f(r_n^+)-\partial_r f(r_n^-) = \beta f(r_n )\,,\; n\in \N \}\,;
\end{align}
 % -------------- %
it is easy to check that operator $H_{\alpha ;\beta }$ is self-adjoint.

As in \cite{EF07} it is useful to introduce a one-dimensional comparison operator which is the usual \emph{Kronig-Penney Hamiltonian} with equidistantly spaced $\delta$ interactions supported by the set $\{x_n :=d(n+\frac12)\,:\: n\in \mathbb{Z}\}$. We denote it $\mathrm{h}_\beta$, it acts as $\mathrm{h}_\beta f=-f''$ on the domain
 % -------------- %
$$
D(\mathrm{h}_\beta) =\{ f \in W^{2,2} (\R \setminus
\cup_{n\in \mathbb{Z}}\{x_n \})\,:\:
f'(x_n^+)- f'(x_n^-) = \beta f(x_n )\,,\; n\in \mathbb{Z}\}\,.
$$
 % -------------- %
Let $E_0 $ stand for the spectral threshold of $\mathrm{h}_\beta $,
 % -------------- %
\begin{equation} \label{infsigmacomp}
 E_0 := \inf \sigma  (\mathrm{h}_\beta )\,;
\end{equation}
 % -------------- %
mimicking the argument used in \cite{EF08} we can check easily that this quantity determines the essential spectrum of $H_{\alpha; \beta}$, namely
 % -------------- %
\begin{equation} \label{infsigmaess}
\sigma_{\mathrm{ess}} (H_{\alpha;\beta}) =[E_0,\infty)\,.
\end{equation}
 % -------------- %
Although it is not important for the present work, let us add that the reasoning made in \cite{EF08} remains valid if the centrifugal coefficients in
\eqref{eq-Hamdecom} replace their nonmagnetic values $c_{0,l}$, and consequently, the essential spectrum is not affected by the Aharonov-Bohm flux consisting of the absolutely continuous bands that coincide with the spectral bands of $\mathrm{h}_\beta$ and the dense point part filling the spectral gaps of $\mathrm{h}_\beta$.

Our interest here concerns the spectrum of $H_{\alpha;\beta}$ in the interval $(-\infty,E_0)$ which is discrete according to \eqref{infsigmaess}. Let us first collect its elementary properties.

 % -------------- %
\begin{proposition}
Suppose that $\beta\ne 0$, then
\vspace{-.5em}
\begin{enumerate}[(i)]
\setlength{\itemsep}{-2pt}
\item $\sharp\sigma_\mathrm{disc}(H_{0;\beta})=\infty$
\item $\sigma_\mathrm{disc}(H_{\frac12;\beta})=\emptyset$
\item $\sigma_\mathrm{disc}(H_{\alpha;\beta})=\sigma_\mathrm{disc}(H_{1-\alpha;\beta})$
\item eigenvalues of $H_{\alpha;\beta}$ are nondecreasing in $[0,\frac12]$, $\,\lambda_j(\alpha')\ge\lambda_j(\alpha)$ if $\alpha'\ge \alpha$
\end{enumerate}
\end{proposition}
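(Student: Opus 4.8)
The plan is to exploit the partial--wave decomposition \eqref{eq-Hammain} and reduce all four claims to one--dimensional facts about the radial operators $H_{\alpha;\beta,l}$. The crucial point is that for $\alpha\in[0,\tfrac12]$ the centrifugal coefficient $c_{\alpha,l}=(l+\alpha)^2-\tfrac14$ is nonnegative for every $l\neq0$ (with minimum $(1-\alpha)^2-\tfrac14\ge0$ at $l=-1$, the nonnegativity holding precisely because $\alpha\le\tfrac12$), whereas $c_{\alpha,0}=\alpha^2-\tfrac14\in[-\tfrac14,0]$. I would therefore first prove a \emph{half--line comparison}: if $c_{\alpha,l}\ge0$, then $\inf\sigma(H_{\alpha;\beta,l})\ge E_0$. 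Indeed, since $c_{\alpha,l}r^{-2}\ge0$, the quadratic form of $H_{\alpha;\beta,l}$ dominates $\int_0^\infty|g'|^2\,\D r+\beta\sum_n|g(r_n)|^2$ on a form domain contained in that of the Dirichlet Kronig--Penney operator $\mathrm{h}_\beta^{D}$ on $\R_+$ with $\delta$ interactions at the points $r_n=d(n+\tfrac12)$; and since $0$ is a midpoint of the lattice $\{x_n\}$ while $\mathrm{h}_\beta$ is invariant under $x\mapsto-x$, imposing a Dirichlet condition at the origin decouples $\mathrm{h}_\beta$ into two unitarily equivalent copies of $\mathrm{h}_\beta^{D}$, so that $\inf\sigma(\mathrm{h}_\beta^{D})\ge\inf\sigma(\mathrm{h}_\beta)=E_0$ by Dirichlet bracketing. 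The min--max principle then gives the asserted bound.

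Granting this, for $\alpha\in[0,\tfrac12]$ the part of $\sigma(H_{\alpha;\beta})$ lying below $E_0$ coincides, multiplicities included, with that of $H_{\alpha;\beta,0}$, every other sector contributing nothing there. Claim (ii) is then immediate: at $\alpha=\tfrac12$ one also has $c_{1/2,0}=0$, so the comparison applies to the sector $l=0$ as well and $\sigma_\mathrm{disc}(H_{\frac12;\beta})=\emptyset$. Claim (i) is equally quick, since $H_{0;\beta}$ is precisely the magnetic--field--free operator $H_\beta$, whose $l=0$ component carries the borderline attraction $-\tfrac1{4r^2}$ and hence, by \cite{EF08}, has infinitely many eigenvalues below $E_0$ accumulating at it. For (iv), on $[0,\tfrac12]$ only the sector $l=0$ is relevant, and there the coefficient $c_{\alpha,0}=\alpha^2-\tfrac14$ is nondecreasing in $\alpha$; the Friedrichs form $g\mapsto\int_0^\infty\bigl(|g'|^2+\tfrac{c}{r^2}|g|^2\bigr)\,\D r+\beta\sum_n|g(r_n)|^2$ is nondecreasing in $c$ and its form domain shrinks as $c$ grows, so the min--max principle yields $\lambda_j(H_{\alpha';\beta,0})\ge\lambda_j(H_{\alpha;\beta,0})$ whenever $\tfrac12\ge\alpha'\ge\alpha$, which is exactly the monotonicity $\lambda_j(\alpha')\ge\lambda_j(\alpha)$.

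It remains to establish the symmetry (iii), which I would obtain as a sector--by--sector unitary equivalence. The map $l\mapsto-1-l$ is a bijection of $\Z$ under which $c_{1-\alpha,\,-1-l}=(-1-l+1-\alpha)^2-\tfrac14=(l+\alpha)^2-\tfrac14=c_{\alpha,l}$, so the differential expressions of $H_{\alpha;\beta,l}$ and $H_{1-\alpha;\beta,-1-l}$ agree; this bijection moreover fixes the exceptional pair $\{0,-1\}$, and the boundary conditions match there because the solution excluded for $(\alpha,\,l=0)$, namely $r^{1/2}K_\alpha(\kappa r)$, is the same as the one excluded for $(1-\alpha,\,l=-1)$, and likewise $r^{1/2}K_{1-\alpha}(\kappa r)$ for $(\alpha,\,l=-1)$ matches $(1-\alpha,\,l=0)$. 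As the $\delta$--conditions at the $r_n$ are untouched, $H_{\alpha;\beta,l}=H_{1-\alpha;\beta,-1-l}$, hence $H_{\alpha;\beta}$ and $H_{1-\alpha;\beta}$ are unitarily equivalent and in particular have the same discrete spectrum. Equivalently, complex conjugation intertwines $H_{\alpha;\beta}$ with $H_{-\alpha;\beta}$ and the gauge transformation $f\mapsto\e^{\pm i\theta}f$ intertwines the latter with $H_{1-\alpha;\beta}$, the $\delta$ interaction being real and rotationally invariant.

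The genuinely delicate point is the bookkeeping of form domains in the two min--max comparisons: one must verify that lowering the coefficient of $r^{-2}$, even across the critical value $-\tfrac14$ where the origin is in the limit--circle case, \emph{enlarges} the Friedrichs form domain, so that the variational inequalities really run in the stated direction; and one must keep track of the fact that the Dirichlet bracketing produces a half--line operator whose $\delta$ interactions sit precisely at the points $r_n$, which is where the lattice--midpoint position of the origin enters. The remaining steps are routine.
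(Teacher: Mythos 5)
Your proposal is correct and follows essentially the same route as the paper: partial-wave reduction, the observation that only sectors with $c_{\alpha,l}<0$ can contribute below $E_0$ (via comparison with the Dirichlet Kronig--Penney half-line operator and min--max/bracketing), citation of \cite{EF08} for (i), the coefficient symmetry $c_{\alpha,l}=c_{1-\alpha,-1-l}$ for (iii), and monotonicity of $c_{\alpha,0}$ with min--max for (iv). The only difference is one of detail: you spell out the Dirichlet decoupling at the lattice midpoint, the matching of boundary conditions in the exceptional channels, and the Friedrichs form-domain bookkeeping, which the paper compresses into references to the bracketing argument and the min--max principle.
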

 % -------------- %
\begin{proof}
Claim (i)  follows from \cite[Thm~5.1]{EF08}. Partial wave operators in the decomposition \eqref{eq-Hammain} can contribute to $\sigma_\mathrm{disc}(H_{\alpha;\beta})$ only if $c_{\alpha,l}<0$. Indeed, if $c_{\alpha,l}=0$ the spectrum of $H_{\alpha;\beta,l}$ coincides, up to multiplicity, with that of the operator $\mathrm{h}_\beta$ amended according to \eqref{eq-aux} with Dirichlet condition at $x=0$, hence \eqref{infsigmacomp} in combination with a bracketing argument \cite[Sec.~XIII.15]{RS4} shows that the discrete spectrum is empty and yields assertion (ii). Furthermore, in view of the min-max principle \cite[Sec.~XIII.1]{RS4} this verifies the above claim and shows that the discrete spectrum comes from $H_{\alpha;\beta,0}$ if $\alpha\in[0,\frac12)$ and from $H_{\alpha;\beta,-1}$ if $\alpha\in(\frac12,1)$. The third claim follows from the identity $c_{\alpha,0}=c_{1-\alpha,-1}$ valid for $\alpha\in(0,1)$, and the last one we get employing the min-max principle again.
\end{proof}

\medskip

It is therefore clear, as indicated in the introduction, that to
describe the discrete spectrum it is sufficient to limit our
attention to the values $\alpha\in(0,\frac12)$ and to consider the
operator $H_{\alpha;\beta,0}$.

%%%%%%%%%%%%%%%%%%%%%%%%%%%%%%%%%%%%%%%%%%%%%%%%%%%%%%%%%%%%%%
\section{Properties of the discrete spectrum}  \label{s:discr}
\setcounter{equation}{0}

The previous discussion shows that the discrete spectrum comes for
$\alpha\in(0,\frac12)$ from the partial wave operator $H_{\alpha;
\beta,0}$ and the decisive quantity is the coefficient
$c_{\alpha,0}= \alpha^2-\frac14$. Let $y$ be the solution of
 % -------------- %
\begin{equation}\label{eq-aux3}
H_{\alpha;\beta,0} g=E_0 g\,,
\end{equation}
 % -------------- %
where $E_0$ is the threshold value \eqref{infsigmacomp}. We are
going to employ the oscillation theory; following its general
strategy we introduce the Pr\"ufer variables $(\rho, \theta )$ as
follows
 % -------------- %
$$
\left(
  \begin{array}{c}
    y \\ y'
  \end{array}
\right)= \rho
\left(
  \begin{array}{c}
    \cos \theta \\ \sin \theta
  \end{array}
\right)\,.
$$
 % -------------- %
As it is usually the case with singular potentials \cite{EF08}, we
can rephrase the discrete spectrum analysis as investigation of the
asymptotic behavior of the function $r\mapsto \theta(r)$; for the
reader convenience the needed facts from the oscillation theory are
collected in Sec.~\ref{s:tools} below.

To formulate the first main result we denote by $u$ the $d
$-periodic real-valued solution of the one-dimensional comparison
problem,
 % -------------- %
\begin{equation}\label{eq-hbeta}
\mathrm{h}_\beta u=E_0 u\,.
\end{equation}
 % -------------- %

Then we can make the following claim:
 % -------------- %
\begin{theorem} \label{th-main}
Suppose that $\alpha\in(0,\frac12)$ and put
 % -------------- %
$$
c_{\mathrm{crit}}:= - \frac{1}{4} \left(\frac{1}{d}\int_{0}^d
\frac{1}{u^2}\,\D x\right)^{-1} \left(\frac{1}{d} \int_{0}^d u^2\,\D
x\right)^{-1}
$$
 % -------------- %
then $E_0$ is an accumulation point of
$\sigma_\mathrm{disc}(H_{\alpha; \beta,0})$ provided
$\frac{c_{\alpha,0}} {c_{\mathrm{crit}}} >1$, while for
$\frac{c_{\alpha,0}} {c_{\mathrm{crit}}} \leq 1$ the operator has at
most finite number of eigenvalues below $E_0$ with the multiplicity
taken into account.
\end{theorem}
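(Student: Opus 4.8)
The plan is to reduce the spectral problem for $H_{\alpha;\beta,0}$ at energy $E_0$ to an oscillation‐theoretic count, following the strategy used in \cite{EF07,EF08} but keeping track of the centrifugal coefficient $c_{\alpha,0}=\alpha^2-\frac14$. First I would recall from the oscillation theory collected in Section~\ref{s:tools} that the number of eigenvalues of $H_{\alpha;\beta,0}$ below $E_0$ equals (up to a bounded error) the total increase of the Pr\"ufer angle $\theta(r)$ of the solution $y$ of \eqref{eq-aux3} as $r$ runs over $(0,\infty)$, and that $E_0$ is an accumulation point of $\sigma_\mathrm{disc}$ precisely when $\theta(r)\to\infty$. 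Thus everything comes down to the asymptotics of $\theta(r)$ as $r\to\infty$.

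\smallskip

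The key step is to compare the radial equation $-y''+\big(\frac{c_{\alpha,0}}{r^2}+V_\beta\big)y=E_0 y$, where $V_\beta=\beta\sum_n\delta(\cdot-r_n)$, with the one‐dimensional comparison equation \eqref{eq-hbeta} whose $d$-periodic solution $u$ satisfies $\mathrm h_\beta u=E_0 u$. The idea is to factor out the comparison solution, writing $y=u\,w$ (on each interval between the circles, and matching the jump conditions through the common factor $u$), so that the singular $\delta$ part is absorbed and $w$ solves a regular equation of the form $-(u^2 w')' = -c_{\alpha,0}\,\frac{u^2}{r^2}\,w$ with a slowly varying coefficient. A Liouville‐type change of the independent variable $t=\int^r u^{-2}$, or equivalently averaging the coefficients over one period, then turns the question into the asymptotics of an Euler‐type equation $-\ddot w = \frac{\tilde c_{\alpha,0}}{t^2}w$ with the effective constant obtained by replacing $\frac{1}{r^2}$ and the two measures $dt=u^{-2}dx$ and $u^2\,dx$ by their period averages; this is exactly where the two factors $\frac1d\int_0^d u^{-2}\,\D x$ and $\frac1d\int_0^d u^2\,\D x$ in the definition of $c_{\mathrm{crit}}$ appear. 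An Euler equation $-\ddot w=\frac{\gamma}{t^2}w$ has oscillatory solutions (infinitely many zeros, hence $\theta\to\infty$) iff $\gamma>\frac14$, and nonoscillatory solutions iff $\gamma\le\frac14$; unwinding the averaging shows $\gamma>\frac14$ is equivalent to $c_{\alpha,0}/c_{\mathrm{crit}}>1$, and the boundary $\gamma=\frac14$ to $c_{\alpha,0}/c_{\mathrm{crit}}=1$, giving both halves of the theorem.

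\smallskip

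To make the averaging rigorous I would not literally replace the periodic coefficients by constants but instead use a comparison (Sturm‐type) argument: bound the periodic coefficient functions above and below by constants close to their averages on long blocks of periods, apply the classical oscillation criterion for $-\ddot w=\frac{\gamma(t)}{t^2}w$ (oscillatory if $\liminf t^2\gamma(t)>\frac14$, nonoscillatory if $\limsup t^2\gamma(t)<\frac14$) on each side of the critical value, and handle the borderline $c_{\alpha,0}=c_{\mathrm{crit}}$ separately by a finer logarithmic‐scale criterion. A secondary technical point is the behavior near $r=0$: the boundary condition \eqref{eq-aux} selects the solution $y\sim r^{1/2+\alpha}$ there (the less singular one), so $\theta(r)$ starts from a definite value and only finitely many zeros can accumulate near the origin, which is why the accumulation, if any, must come from $r\to\infty$; this needs to be checked but is routine given the explicit indicial behavior.

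\smallskip

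The main obstacle I expect is the periodic‐to‐averaged reduction done with sufficient care to pin down the \emph{exact} threshold constant $c_{\mathrm{crit}}$ rather than just its order of magnitude — in particular, justifying that the relevant "effective coupling" is the product of the two period averages $\big(\frac1d\int_0^d u^{-2}\big)\big(\frac1d\int_0^d u^2\big)$ and not some other combination. Controlling the cross terms that arise from the non‐constancy of $u$ over a period, and showing they do not shift the oscillation threshold, is the delicate part; the borderline case $c_{\alpha,0}/c_{\mathrm{crit}}=1$, where one must decide finiteness versus infinitude of the eigenvalue count at the very edge of oscillation, will require the sharpest form of the argument.
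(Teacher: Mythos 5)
Your overall architecture is sound and essentially parallel to the paper's: reduce to the boundedness of a Pr\"ufer-type angle via the oscillation theory of Section~\ref{s:tools}, factor out the periodic solution $u$ of \eqref{eq-hbeta} (the substitution $y=uw$ indeed absorbs the $\delta$ terms, since the jump of $u^2w'=uy'-u'y$ at each $r_n$ vanishes), and identify the effective coupling by period averaging; your heuristic even produces the correct constant, since averaging $u^4$ with respect to $dt=u^{-2}dr$ gives $\bigl(\int_0^d u^2\,\D r\bigr)\bigl(\int_0^d u^{-2}\,\D r\bigr)^{-1}$ and the factor $r^{-2}\approx D_2^2/t^2$ supplies the second average, so the Euler threshold $\tfrac14$ becomes $c_{\alpha,0}/c_{\mathrm{crit}}=1$.

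The genuine gap is the step you yourself flag as delicate: how to pass rigorously from the periodic coefficient to its period average at the critical $1/t^2$ scale. The tool you propose --- a Sturm comparison in which the periodic coefficient is ``bounded above and below by constants close to their averages on long blocks of periods'' --- cannot work as stated, because Sturm comparison and the Kneser criteria you quote require \emph{pointwise} bounds, and the pointwise sup and inf of $|c_{\alpha,0}|\,u^4\,t^2/r^2$ over a period differ from its mean; since the oscillation threshold is sharp, this leaves an interval of couplings around $c_{\mathrm{crit}}$ (precisely the interesting regime) where your comparison decides nothing. What is needed is an averaging statement for the angle variable itself, not for the coefficient: this is what the paper gets from Schmidt's generalized Pr\"ufer variables built from $u,v$, the Kepler transformation leading to \eqref{eq-aux4}, and Propositions~2--3 of \cite{Schmidt2000}, which assert that the period-averaged angle $\overline{\phi}$ obeys \eqref{eq-aux5} with the averaged constants $A,B$ up to an $\mathcal{O}(r^{-2})$ error and is bounded or unbounded according to $4AB\lessgtr 1$. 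Likewise, your borderline case $c_{\alpha,0}=c_{\mathrm{crit}}$ is not settled by an unspecified ``finer logarithmic-scale criterion'': one needs a sharp result such as Proposition~4 of \cite{Schmidt2000}, which requires the remainder in \eqref{eq-aux5} to be $o\bigl(r^{-1}(\log r)^{-2}\bigr)$ and then yields boundedness, i.e.\ at most finitely many eigenvalues. Unless you either import these propositions or prove an equivalent angle-averaging lemma for your Liouville-transformed equation, the proof is incomplete exactly at the point that determines the value of $c_{\mathrm{crit}}$.
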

 % -------------- %
\begin{proof}
The asymptotic properties of the function $\theta$ can be found in a
way similar to that used in \cite{Schmidt2000}. Let $u,\,v$ be
linearly independent real-valued solutions of equation
(\ref{eq-hbeta}), where $u$ is the mentioned $d$-periodic function
involved in the definition of $c_{\mathrm{crit}}$, chosen in such a
way that the Wronskian $W[u,v]=1$. Furthermore, we introduce the
generalized Pr\"ufer variables
 % -------------- %
\begin{equation}\label{eq-anPR}
\left(
  \begin{array}{c}
    y \\
    y' \\
  \end{array}
\right) = \left(
            \begin{array}{cc}
              u & v \\
              u' & v' \\
            \end{array}
          \right) a \left(
                      \begin{array}{c}
                        \sin \gamma  \\
                      -\cos \gamma \end{array}
                    \right)\,,
\end{equation}
 % -------------- %
where $a$ is a smooth positive function and $\gamma $ is continuous
in view of \cite[Lemma~3.4]{EF08}. On the other hand, according to
\cite[Prop.~1]{Schmidt2000} the functions $\gamma (\cdot)$ and
$\theta (\cdot)$ have the same asymptotics up to the constant.
Consequently, it is sufficient to investigate the asymptotics of
$\gamma (\cdot)$ which we will do using the expression
 % -------------- %
$$
\gamma ' = \frac{c_{\alpha,0}}{r^2}\left( u\sin \gamma -v \cos
\gamma \right)^{2}= c_{\alpha,0}\,u^2\,\cos^2 \gamma\,
\left(\frac{1}{r}\tan \gamma -\frac{v}{ru}\right)^2\,,
$$
 % -------------- %
which can be obtained from (\ref{eq-anPR}) by a direct computation
using \eqref{eq-aux3} and the Wronskian properties of the functions
$u,v$. In the next step we employ the Kepler transformation
 % -------------- %
$$
\tan \phi = \frac{1}{r}\, \tan \gamma -\frac{1}{r}\frac{v}{u}\,,
$$
 % -------------- %
which yields
 % -------------- %
\begin{equation}\label{eq-aux4}
  \phi' = \frac{1}{r} \left( -\sin \phi \cos \phi +\mathcal{B}(r) \sin ^2 \phi  +
  \mathcal{A}(r) \cos^2 \phi \right)\,,
\end{equation}
 % -------------- %
where $\mathcal{A}$ and $\mathcal{B}$ are the $d$-periodic functions
defined by
 % -------------- %
\begin{equation}\label{eq-defAB}
\mathcal{B}(r):=c_{\alpha,0} u(r)^2 \quad\; \mathrm{and } \quad\;
\mathcal{A}(r):= - \frac{1}{u(r)^2}\,.
\end{equation}
 % -------------- %
The Kepler transformation preserves the asymptotics, i.e. $\gamma
(r)= \phi (r)+\mathcal{O}(1)$ holds as $r\to \infty$, thus we may
inspect the asymptotics of $\phi (\cdot)$. This can be done in the
same way as for regular period potentials. Specifically, we define
 % -------------- %
\begin{equation}\label{eq-phidash}
  \overline{\phi} (r):=\frac{1}{d }\int_{r}^{r+d}\phi(\xi)\,\D\xi \,, \quad r>R_0\,,
\end{equation}
 % -------------- %
for some $R_0>0$. Proposition 2 of \cite{Schmidt2000} allows us to
conclude that $\overline{\phi}(r)=\phi(r)+o(1)$ and
 % -------------- %
\begin{equation}\label{eq-aux5}
  \overline{\phi}'(r) = \frac{1}{r} \left( -\sin \overline{\phi} \cos \overline{\phi} + B \sin ^2 \overline{\phi}  +
 A \cos^2 \overline{\phi} \right)+\mathcal{O}(r^{-2})\,,
\end{equation}
 % -------------- %
where
 % -------------- %
$$
A:=\frac{1}{d}\int_{0}^d \mathcal{A}(r)\,\D r \quad\; \mathrm{and }
\quad\; B:=\frac{1}{d}\int_{0}^d \mathcal{B}(r)\,\D r\,.
$$
 % -------------- %
Now we apply Proposition 3 of \cite{Schmidt2000} which states that
$\overline{\phi}$ is bounded provided $4AB<1$ and unbounded if
$4AB>1$. Combining this fact with the observation that
 % -------------- %
$$
4AB = -4c_{\alpha, 0} \left(\frac{1}{d }\int_{0}^d \frac{1}{u^2}\,\D
r\right) \left(\frac{1}{d }\int_{0}^d u^2\,\D r\right) =
\frac{c_{\alpha,0}}{c_{\mathrm{crit}}}
$$
 % -------------- %
we come to the claim of  theorem for any $c_{\alpha , 0}$ apart from the case
 $c_{\alpha , 0} = c_{\mathrm{crit}}$.  To complete the proof we note that
 $$
 \lim_{r\to 0 }\,r(\log r )^2 \left(  \overline{\phi}'(r) - \frac{1}{r} \left( -\sin \overline{\phi} \cos \overline{\phi} + B \sin ^2 \overline{\phi}  +
 A \cos^2 \overline{\phi} \right)\right) =0 \,,
 $$
 cf.~(\ref{eq-aux5}). Applying now Proposition~4~of~\cite{Schmidt2000} we conclude that, if $4AB =1$ then $ \overline{\phi}$ is globally bounded. This equivalently means that for $c_{\alpha , 0} = c_{\mathrm{crit}}$ at most finite number of discrete spectrum below $E_0$ can exist.
\end{proof}

\bigskip

This allows us to prove the following claim.

 % -------------- %
\begin{theorem}
There exists an $\ac (\beta )=\ac \in (0,\frac12)$ such that for $\alpha \in
(0, \ac)$ the operator $H_{\alpha, \beta }$ has infinitely
many eigenvalues accumulating at the threshold $E_0$, the
multiplicity taken into account, while for $\alpha \in
[\ac ,\frac12)$ the cardinality of discrete spectrum is finite.
\end{theorem}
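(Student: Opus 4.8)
The plan is to deduce this theorem from Theorem~\ref{th-main} by analyzing how the ratio $c_{\alpha,0}/c_{\mathrm{crit}}$ depends on $\alpha$ and invoking the monotonicity established in the Proposition. First I would note that $c_{\mathrm{crit}}<0$ is a fixed negative constant depending only on $\beta$ and $d$ (through the periodic solution $u$ of $\mathrm{h}_\beta u = E_0 u$), while $c_{\alpha,0}=\alpha^2-\tfrac14$ is a continuous, strictly increasing function of $\alpha^2$ on $(0,\tfrac12)$, running from $-\tfrac14$ at $\alpha=0^+$ up to $0^-$ at $\alpha=\tfrac12^-$. Hence the quotient $q(\alpha):=c_{\alpha,0}/c_{\mathrm{crit}}$ is continuous and strictly \emph{decreasing} on $(0,\tfrac12)$ (numerator negative and increasing, denominator negative and fixed), with $q(0^+)=\tfrac{1}{4|c_{\mathrm{crit}}|}$ and $q(\tfrac12^-)=0$.

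Next I would dispose of the degenerate possibility that $q(\alpha)\le 1$ already for all $\alpha\in(0,\tfrac12)$: this happens precisely when $\tfrac{1}{4|c_{\mathrm{crit}}|}\le 1$, i.e. $|c_{\mathrm{crit}}|\ge\tfrac14$, and in that case Theorem~\ref{th-main} gives finiteness for every $\alpha$, which would contradict $\ac\in(0,\tfrac12)$ unless one reads the statement as allowing $\ac=0^+$ in a limiting sense. Since the Proposition asserts $\sharp\sigma_\mathrm{disc}(H_{0;\beta})=\infty$, continuity of the eigenvalue branches (Proposition~(iv), monotonicity in $\alpha$) together with a lower-semicontinuity/limiting argument forces $q(0^+)>1$, i.e. $|c_{\mathrm{crit}}|<\tfrac14$; alternatively one checks this directly from the explicit form of $u$. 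Granting $q(0^+)>1>0=q(\tfrac12^-)$, the intermediate value theorem and strict monotonicity of $q$ produce a \emph{unique} $\ac=\ac(\beta)\in(0,\tfrac12)$ with $q(\ac)=1$, and $q(\alpha)>1$ for $\alpha<\ac$, $q(\alpha)<1$ for $\alpha>\ac$.

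Finally I would translate this back through Theorem~\ref{th-main}, which says $E_0$ is an accumulation point of $\sigma_\mathrm{disc}(H_{\alpha;\beta,0})$ exactly when $q(\alpha)>1$ and that there are at most finitely many eigenvalues below $E_0$ when $q(\alpha)\le 1$. Since the Proposition shows all of $\sigma_\mathrm{disc}(H_{\alpha;\beta})$ for $\alpha\in(0,\tfrac12)$ is carried by the single partial-wave operator $H_{\alpha;\beta,0}$, this yields: for $\alpha\in(0,\ac)$ infinitely many eigenvalues accumulating at $E_0$, and for $\alpha\in[\ac,\tfrac12)$ only finitely many (the boundary case $\alpha=\ac$ being covered by the $4AB=1$ part of Theorem~\ref{th-main}). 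The main obstacle I anticipate is the clean justification that $q(0^+)>1$, i.e. that $c_{\mathrm{crit}}>-\tfrac14$ strictly: this is what guarantees $\ac>0$ rather than the infinite-spectrum regime being empty. The natural route is to show $\big(\tfrac1d\int_0^d u^{-2}\big)\big(\tfrac1d\int_0^d u^2\big)>1$ with strict inequality whenever $u$ is non-constant, which is immediate from the Cauchy--Schwarz inequality applied to $u$ and $u^{-1}$ (equality forcing $u$ constant, impossible since $u$ solves a $\delta$-perturbed equation with $\beta\ne 0$); then $|c_{\mathrm{crit}}|=\tfrac14\big/\big[(\tfrac1d\int u^{-2})(\tfrac1d\int u^2)\big]<\tfrac14$, giving $q(0^+)>1$. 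Consistency with the Proposition at $\alpha=0$ then follows automatically, and the asymptotic behavior $\ac(\beta)\to\tfrac12-$ as $\beta\to\pm\infty$ and $\ac(\beta)\to 0+$ as $\beta\to 0$ (claimed in the introduction) can be read off from the corresponding limits of $|c_{\mathrm{crit}}(\beta)|$, though that lies beyond the present statement.
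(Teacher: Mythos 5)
Your argument is correct and is essentially the paper's own proof: reduce everything to Theorem~\ref{th-main}, use monotonicity of $\alpha\mapsto c_{\alpha,0}=\alpha^2-\frac14$, show $c_{\mathrm{crit}}\in(-\frac14,0)$ by the Cauchy--Schwarz (Schwartz) inequality with strictness because $u$ is nonconstant, and set $\ac=\sqrt{c_{\mathrm{crit}}+\frac14}$, the boundary case $\alpha=\ac$ being handled by the $4AB=1$ part of Theorem~\ref{th-main}. The detour you sketch via Proposition~(i) and eigenvalue continuity in $\alpha$ is unnecessary (and would need justification); the direct Cauchy--Schwarz estimate you give at the end is exactly what the paper uses.
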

 % -------------- %
\begin{proof}
The function $\alpha \mapsto c_{\alpha , 0} =\alpha^2 -\frac14$
is increasing in $(0,\frac12)$. Thus it suffices to show that
$c_{\mathrm{crit}}\in(-\frac14,0)$ which is an easy consequence of
Schwartz inequality,
 % -------------- %
$$
c_{\mathrm{crit}}:= - \frac{1}{4} \left(\frac{1}{d}\int_{0}^d
\frac{1}{u^2}\,\D x\right)^{-1} \left(\frac{1}{d} \int_{0}^d u^2\,\D
x\right)^{-1} > - \frac{1}{4} \left(\frac{1}{d }\int_{0}^d \D
x\right)^{-2} = - \frac{1}{4}\,;
$$
 % -------------- %
note that the inequality is sharp because the function $u$ is
nonconstant. The claim then follows from Theorem~\ref{th-main} if we
set $\ac  := \sqrt{c_{\mathrm{crit}}+\frac14}$.
\end{proof}

\bigskip

Moreover, in our present case  the critical value can be computed
explicitly because we know the function $u$ which is equal to
 % -------------- %
\begin{equation}\label{eq-gefunc}
%\begin{split}
u(x)  = \left\{
  \begin{array}{ll}
 \e^{-\kappa_0 (x-d/2) }+\e^{\kappa_0 d}\e^{\kappa_0 (x-d/2)} & \;\hbox{for }  0<x <\frac{d}{2}\,,
\\[.5em]
   \e^{\kappa_0 d} \e^{-\kappa_0 (x-d/2)}+    \e^{\kappa_0 (x-d/2)} & \;\hbox{for }  \frac{d}{2}<x<d
  \end{array}
%\end{split}
\right.
\end{equation} % -------------- %
cf.~\cite[Sec.~III.2.3]{AGHH}, where $i\kappa_0 =  k_0$,
$k_0^2=E_0$. Note that the function is obviously real-valued if
$\beta<0$ so that $E_0<0$ and $\kappa_0>0$, in the opposite case with
$\beta>0$ we have $E_0>0$ and $\kappa_0$ is purely imaginary,
nevertheless $u$ is a multiple of a real-valued function again. A
straightforward calculations then yields
 % -------------- %
$$
D_1: = \frac{1}{d }\int_{0}^d u^2\,\D x =\frac{2}{d}\,\e^{\kappa_0
d} \left(\frac{1}{2\kappa_0 } \left(\e^{\kappa_0 d }- \e^{-\kappa_0
d }\right)+d\right)
$$
 % -------------- %
and
 % -------------- %
$$
D_2: = \frac{1}{d }\int_{0}^d \frac{1}{u^{2}}\,\D x
=\frac{1}{d\kappa_0} \,\e^{-\kappa_0 d}\left( \frac{1}{2} -
\frac{1}{1+\e^{\kappa_0 d }}\right)\,.
$$
 % -------------- %
Using this notation we have
 % -------------- %
$$
c_{\mathrm{crit}}=-\frac{1}{4} \frac{1}{D_1 D_2}\,.
$$
 % -------------- %

These expressions allow us, in particular, to find the behavior of
the critical flux values in the asymptotic regimes. In the
\emph{weak coupling constant case}, $\beta \to 0 $, we have
$\kappa_0 \to 0 $ and the quantities $D_1$ and $D_2$ have the
following limits
 % -------------- %
$$
D_1 \to 4 \,,\quad D_2\to \frac{1}{4}\;\quad \mathrm{as } \quad
\beta \to 0 \,.
$$
 % -------------- %
This implies $c_{\mathrm{crit}}\to -\frac{1}{4}$, and therefore
 % -------------- %
\begin{equation} \label{weak}
\ac (\beta)\to 0+\;\quad \mathrm{as } \quad \beta \to 0
\end{equation}
 % -------------- %
which is certainly not surprising in view of the fact that the
discrete spectrum is empty for $\beta=0$.

In the \emph{strong coupling constant case} one has to take the sign
of $\beta$ into account as the spectral condition takes a different
form,
 % -------------- %
$$
\coth \Big(\frac12\kappa d\Big)=\frac{2\kappa}{|\beta |} \qquad
\mathrm{and} \qquad \cot\Big(\frac12 kd\Big) = \frac{2k}\beta
$$
 % -------------- %
for $\mp\beta\to\infty$, and $E_0(\beta)$ tends to $-\infty$ and
$\big(\frac{\pi}{d}\big)^2$, respectively. In both cases, however,
$c_{\mathrm{crit}}$ tends to zero, exponentially fast for the
attractive $\delta$ interactions when
 % -------------- %
$$
c_{\mathrm{crit}} \approx -\frac{d ^2}{8}\, \e^{-|\beta | d/2}\,.
$$
 % -------------- %
Furthermore, this yields
 % -------------- %
\begin{equation} \label{weak}
\ac (\beta)\to \frac12- \;\quad \mathrm{as } \quad \beta \to
\pm\infty\,.
\end{equation}
 % -------------- %

%%%%%%%%%%%%%%%%%%%%%%%%%%%%%%%%%%%%%%%%%%%%%%%%%%%%%%%%%%%%%%%%%
\section{Nonexistence of the discrete spectrum for weak $\delta$ interactions}\label{s:finite}
\setcounter{equation}{0}

The above results tell us nothing about the spectrum of $H_{\alpha ; \beta }$ for $\alpha\in[\ac,\frac12)$, in particular, we do not know whether the operator may have some eigenvalues. Our aim now is to show that for a fixed $\alpha$, with the exception of the nonmagnetic and half-of-the-quantum cases, we have
 % -------------- %
$$
\sigma_{\mathrm{disc}} (H_{\alpha ;\beta }) = \emptyset\,.
$$
 % -------------- %
provided the involved $\delta$ interaction is sufficiently weak. Using a modified version of the Hardy inequality, we are going to prove the following claim:
 % -------------- %
\begin{theorem}\label{th-disc}
  Given $\alpha \in (0, \frac12 )$  there exists a $\beta _0 >0$ such that
  for any $|\beta | <\beta_0 $ the operator $H_{\alpha; \beta }$ has no discrete spectrum.
\end{theorem}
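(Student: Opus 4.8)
The plan is to reduce, as in the Proposition, to the partial-wave channel $H_{\alpha;\beta,0}$: for $\alpha\in(0,\frac12)$ the whole discrete spectrum of $H_{\alpha;\beta}$ comes from that channel, so it suffices to show $H_{\alpha;\beta,0}\ge E_0$; since $\sigma_{\mathrm{ess}}(H_{\alpha;\beta,0})=[E_0,\infty)$, which follows by the reasoning behind \eqref{infsigmaess}, this gives $\sigma(H_{\alpha;\beta,0})=[E_0,\infty)$ and hence $\sigma_{\mathrm{disc}}(H_{\alpha;\beta})=\emptyset$. In form language the aim is to prove
\[
q_{\alpha;\beta,0}[f]:=\int_0^\infty|f'|^2\,\D r+c_{\alpha,0}\int_0^\infty\frac{|f|^2}{r^2}\,\D r+\beta\sum_{n\in\N}|f(r_n)|^2\ \ge\ E_0\,\|f\|^2
\]
for every $f$ in the form domain, the negative coefficient $c_{\alpha,0}=\alpha^2-\frac14$ being the only obstruction.

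The main device is a ground-state (Jacobi) substitution with respect to the one-dimensional comparison problem. Let $u$ be the strictly positive $d$-periodic solution of $\mathrm{h}_\beta u=E_0u$ given explicitly by \eqref{eq-gefunc}, restricted to $\R_+$; it is bounded above and below, $0<u_{\min}\le u\le u_{\max}<\infty$, and $u'\in L^\infty$. Writing $f=u\,h$ and using that $-u''=E_0u$ off the points $r_n$ together with the jump relation $u'(r_n^+)-u'(r_n^-)=\beta u(r_n)$, an integration by parts carried out on each interval $(r_n,r_{n+1})$ — in which the kink terms collected at the $r_n$ cancel exactly against the $\delta$-term in $q_{\alpha;\beta,0}$, while the endpoint contributions vanish because $f(0^+)=0$, i.e.\ $h(0)=0$, and $f$ decays at infinity — produces the identity
\[
q_{\alpha;\beta,0}[f]-E_0\,\|f\|^2=\int_0^\infty u^2\,\bigl|(f/u)'\bigr|^2\,\D r+c_{\alpha,0}\int_0^\infty\frac{|f|^2}{r^2}\,\D r .
\]
One first checks this on a form core and then extends it by continuity in the form norm, the right-hand side being controlled by the form norm because $u,u',1/u$ are bounded.

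What remains is the \emph{modified Hardy inequality}: one must bound $\int_0^\infty u^2|(f/u)'|^2\,\D r$ from below by $(\frac14-\alpha^2)\int_0^\infty r^{-2}|f|^2\,\D r=-c_{\alpha,0}\int_0^\infty r^{-2}|f|^2\,\D r$. Put $h=f/u$; it is continuous with $h(0)=0$, and the identity just derived shows $\int_0^\infty u^2|h'|^2<\infty$, hence $\int_0^\infty|h'|^2<\infty$ because $u\ge u_{\min}>0$. The classical one-dimensional Hardy inequality $\int_0^\infty|h'|^2\ge\frac14\int_0^\infty r^{-2}|h|^2$ then applies — its boundary term at the origin is harmless, since $h(0)=0$ and $h'\in L^2$ already force $h(r)=o(r^{1/2})$ — and squeezing $u_{\min}^2\le u^2\le u_{\max}^2$ upgrades it to $\int_0^\infty u^2|h'|^2\ge\frac{u_{\min}^2}{4u_{\max}^2}\int_0^\infty r^{-2}|f|^2$. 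Hence $q_{\alpha;\beta,0}[f]\ge E_0\|f\|^2$ as soon as $u_{\min}^2/u_{\max}^2\ge1-4\alpha^2$. From \eqref{eq-gefunc} one computes $u_{\min}^2/u_{\max}^2=\operatorname{sech}^2(\kappa_0 d/2)$ when $\beta<0$ and $u_{\min}^2/u_{\max}^2=\cos^2(k_0 d/2)$ when $\beta>0$, both of which approach $1$ as $\beta\to0$; since $1-4\alpha^2<1$ strictly for $\alpha\in(0,\frac12)$, there is a $\beta_0>0$ with $u_{\min}^2/u_{\max}^2\ge1-4\alpha^2$ whenever $|\beta|<\beta_0$, which proves the theorem.

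The only genuinely delicate point, I expect, is the rigorous justification of the substitution identity — tracking the kink contributions of $u$ at the $r_n$ and controlling the behaviour at $r=0$, where the decay of form-domain functions is only weak. The decisive remark is that, once the identity is in place, membership in the form domain forces $(f/u)'\in L^2(\R_+)$ and therefore the stronger decay $f(r)=o(r^{1/2})$, which is exactly what legitimizes both the integration by parts and the use of the classical Hardy inequality. Everything else is elementary; the smallness of $\beta_0$ is to be expected, since as $\beta\to0$ the only remaining sub-threshold mechanism is the centrifugal term $c_{\alpha,0}/r^2$, which the Hardy gap $\alpha^2>0$ keeps under control.
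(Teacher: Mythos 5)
Your argument is correct, and its skeleton coincides with the paper's: reduction to the $l=0$ partial wave, the ground-state substitution $f=u\chi$ with the periodic band-edge solution $u$ of $\mathrm{h}_\beta u=E_0u$, and the resulting identity $q_{\alpha;\beta,0}[u\chi]=\int_0^\infty u^2(\chi')^2\,\D r+c_{\alpha,0}\int_0^\infty u^2\chi^2 r^{-2}\,\D r$ (the kink/boundary bookkeeping you sketch is exactly the paper's computation leading to (\ref{eq-auxIII})). Where you genuinely diverge is the final Hardy step. The paper proves a \emph{weighted} Hardy inequality with the weight $u^2$ and the sharp constant $\tfrac14$, at the price of a correction term $\tilde q[\chi]=-\tfrac12\int_0^\infty (u^2)'\chi^2 r^{-1}\D r$ (Lemma \ref{l:est1}), and then shows this correction is relatively small, $|\tilde q[\chi]|\le\eta(\beta)\int(\chi')^2$ with $\eta(\beta)=\mathcal{O}(\kappa_0(\beta))$, using $|u^2(r)-u^2(0)|=\mathcal{O}(\kappa_0)$ together with Schwarz and the classical Hardy inequality (Lemma \ref{l:est2}). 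You instead sandwich $u_{\min}\le u\le u_{\max}$, apply the classical Hardy inequality to $\chi$, and accept the degraded constant $\tfrac14\,u_{\min}^2/u_{\max}^2$, which suffices because $u_{\min}^2/u_{\max}^2=\operatorname{sech}^2(\kappa_0 d/2)$ resp.\ $\cos^2(k_0 d/2)$ tends to $1$ as $\beta\to0$ while the required bound $1-4\alpha^2$ is fixed below $1$ (both ratios check out against (\ref{eq-gefunc})). Your route is more elementary — it dispenses with both lemmas and yields an explicit sufficient condition, hence an explicit $\beta_0(\alpha)$ — whereas the paper's version isolates the entire $\beta$-dependence in one perturbative term while keeping the sharp weighted constant, a structure that does not rely on locating the extrema of $u$. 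The technical caveats you flag (extension from a core, $\chi(0)=0$ and $\chi(r)=o(r^{1/2})$ justifying the integrations by parts and Hardy at the origin) are treated at essentially the same level of rigor as in the paper, so I see no gap.
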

 % -------------- %
\begin{proof}
To show that the discrete spectrum is void it suffices to investigate the `lowest' partial-wave component $H_{\alpha; \beta , 0}$. Consider the quadratic form associated with the `shifted' operator $H_{\alpha; \beta , 0}-E_0$,
 % -------------- %
\begin{align}\label{eq1}
 & q_{\alpha; \beta , 0} [f] := \\[.3em] & \int_{0}^\infty |f(r)'|^2\,\mathrm{d} r + c_{\alpha , 0 }\int_{0}^\infty \frac{1}{r^2}|f(r)|^2 \,\mathrm{d} r + \beta \sum_{n}\int_{\mathcal{C }_{r_n }} |f (r)|^2\,\mathrm{d} \mu_{{ \mathcal{C} _{r_n }}} - E_0 \|f\|^2\,, \nonumber
\end{align}
 % -------------- %
where $ \mu_{ \mathcal{C} _{r_n } }$ defines the arc length measure on $ \mathcal{C} _{r_n } $ and  $c_{\alpha , 0}\in (-1/4,0)$, moreover, $f\in D(H_{\alpha; \beta , 0})$, i.e. that it satisfies the boundary conditions given by (\ref{eq-domain1})  and   (\ref{eq-domain2}). Without loss of generality we may assume that $f$ is a  real function. As in the previous discussion  $u$ stands for the periodic  function defining the `lowest'generalized eigenfunction of $H_{\alpha; \beta , 0}$. We may assume that $u$ is positive, then from the explicit expression (\ref{eq-gefunc}) we see that for a fixed $\beta_1 >0$ there exists a $C_{\mathrm{min}}>0$ such that $u \geq C_{\mathrm{min}}$ holds for any $|\beta |\leq \beta_1$. Furthermore, we put $\chi =\frac{f}{u}$; one can easily check that $\chi \in H_0^{2,2} (\R_+)$.  Integrating by parts and using the boundary conditions (\ref{eq-domain1}) and (\ref{eq-domain2}) we get
 % -------------- %
  $$
  q_{\alpha ;\beta ,0}[u\chi ] = -\int_{0}^\infty  u  \chi (u  \chi)'' \,\mathrm{d}r+  c_{\alpha, 0}\int_{0}^\infty u^2\frac{\chi ^2}{r^2} \,\mathrm{d}r  - E_0 \| u  \chi \|^2\,.
  $$
 % -------------- %
After expanding the second derivative and using the equation that $u$ as a generalized eigenfunction satisfies we get
 % -------------- %
  \begin{eqnarray}
  % \nonumber to remove numbering (before each equation)
     q_{\alpha ;\beta ,0}[u\chi ] &=& \int_{0}^\infty u ^2 \left(  -\chi  \chi '' +  \frac{ c_{\alpha ,0} }{r^2}\chi ^2 \right)\,\mathrm{d}r
  -  \int_{0}^\infty (u^2 )'  \chi \chi' \,\mathrm{d}r\, \nonumber
   \\ \label{eq-auxIII}
   &=& \int_{0}^\infty u ^2   (\chi')^2 \,\mathrm{d}r +  c_{\alpha ,0} \int_{0}^\infty u^2 \frac{ \chi ^2 }{r^2} \,\mathrm{d}r \,,
  \end{eqnarray}
 % -------------- %
where in the second step we performed integration by parts in the last expression of the first line with the boundary term vanishing due to \eqref{eq-aux}. The following lemma will be useful in the further discussion.
 % -------------- %
\begin{lemma} \label{l:est1}
  We have
 % -------------- %
  \begin{equation}\label{eq-HardyII}
     q_{\alpha ;\beta ,0}[u\chi ]>
    \alpha^2 \int_{0}^\infty u^2 \frac{ \chi ^2 }{r^2}\,\mathrm{d}r  - \frac{1}{2} \int_{0}^\infty (u^2)' \frac{ \chi ^2 }{r} \,\mathrm{d}r\,.
  \end{equation}
 % -------------- %
\end{lemma}
 % -------------- %
  \begin{proof}
To prove the claim we start from the expression
 % -------------- %
  \begin{eqnarray}%\label{eq-auxI}
  % \nonumber to remove numbering (before each equation)
    \int_{0}^\infty u^2  (( r^{-1/2 }\chi )' )^2 \,r \mathrm{d}r=
    \int_{0}^\infty u^2  \left( -\frac{1}{2} r^{-3/2 }\chi  + r^{-1/2 }\chi ' \right)^2 r \mathrm{d}r \nonumber
      \\ \label{eq-auxa}
     = \int_{0}^\infty u^2  \left(  \frac{1}{4r^2}\chi ^2   -\frac{1}{r} \chi \chi ' + (\chi ')^2 \right) \mathrm{d}r\,.
    % \\
    % &=&  \\
    % &=&  \\
    % &=& 
  \end{eqnarray}
 % -------------- %
On the other hand, the second term in (\ref{eq-auxa}) can be rewritten as
 % -------------- %
$$
    -\int_{0}^\infty u^2 \frac{\chi \chi'}{r}\,\mathrm{d}r  =-\frac{1}{2} \int_{0}^\infty u^2  \frac{(\chi ^2)'}{ r}\mathrm{d}r
    = \frac{1}{2}  \int_{0}^\infty \left(  \frac{(u^2)' }{r} - \frac{u^2}{ r^2}\right) \chi ^2 \,\mathrm{d}r\,,
$$
 % -------------- %
where we have again employed integration by parts in combination with \eqref{eq-aux}; inserting this to (\ref{eq-auxa}) we get
 % -------------- %
  $$
  \int_{0}^\infty u^2  (( r^{-1/2 }\chi )' )^2 \,r \mathrm{d}r =
    \int_{0}^\infty u^2  \left( (\chi')^2-\frac{\chi^2}{4r^2} \right)\,\mathrm{d}r +
    \frac{1}{2}  \int_{0}^\infty  \frac{(u^2)' }{r} \chi^2 \,\mathrm{d}r\,.
  $$
 % -------------- %
Since $\int_{0}^\infty u^2  (( r^{-1/2 }\chi )' )^2 r  \mathrm{d}r>0$, taking into account expression (\ref{eq-auxIII}) and using $c_{\alpha, 0}= \alpha^2 -\frac{1}{4}$ we   obtain the claim of lemma.
  \end{proof}

\medskip

With a further purpose in mind we introduce a symbol for the second term at the right-hand side of (\ref{eq-HardyII}),
 % -------------- %
$$
 \tilde{q}[\chi ] := -\frac{1}{2}  \int_{0}^\infty (u^2)'\, \frac{\chi^2 }{r} \,\mathrm{d}r\,.
$$
 % -------------- %
Our next aim it to show that $\tilde{q}[\cdot  ]$ is small with respect to $q_{\alpha; \beta , 0}[\cdot ]$. This the contents of the following lemma.
 % -------------- %
\begin{lemma} \label{l:est2}
We have
 % -------------- %
\begin{equation}\label{eq-estimqtilde}
|\tilde{q}[\chi ] | \leq \eta (\beta ) \int_{0}^{\infty } (\chi')^2 \,\mathrm{d}r\,,
\end{equation}
 % -------------- %
where the function $\eta (\cdot )$ behave asymptotically as
 % -------------- %
 $$
 \eta (\beta )=\mathcal{O}(\kappa _0 (\beta ))
 $$
 % -------------- %
for $\beta $ small. Here $\kappa _0 = \kappa _0 (\beta )$ is the quantity introduced in \eqref{eq-gefunc} and  the expression on the right-hand side does not depend on $\chi$.
\end{lemma}
 % -------------- %
\begin{proof}
Note first that an integration by parts in combination with conditions \eqref{eq-aux} yields
 % -------------- %
\begin{eqnarray*}
% \nonumber to remove numbering (before each equation)
  \tilde{q}[\chi ]  = \frac{1}{2}  \int_{0}^\infty u^2 \left( \frac{\chi^2 }{r}\right)' \mathrm{d}r=
  \frac{1}{2}  \int_{0}^\infty \left(u^2(r)-u^{2}(0)\right) \left( \frac{\chi^2 }{r}\right)' \mathrm{d}r=\\
  =\frac{1}{2}  \int_{0}^\infty \left(u^2(r)-u^{2}(0)\right) \left( \frac{2\chi \chi ' }{r} - \frac{\chi^2}{r^2}\right)\mathrm{d}r\,.
\end{eqnarray*}
 % -------------- %
On the other hand, from the explicit expression (\ref{eq-gefunc}) we get easily
 % -------------- %
$$
\left| u^2(r)-u^{2}(0)\right| = \mathcal{O}(\kappa_0 (\beta ))
$$
 % -------------- %
as $\beta \to 0 $ where the right-hand side does not depend on $r$ since the function $u$ is periodic, and naturally neither on $\chi$. Consequently,
 % -------------- %
\begin{eqnarray} \label{eq-auxIV}
% \nonumber to remove numbering (before each equation)
  |\tilde{q}[\chi ] | \leq   \eta_1 (\beta )\left(\int_{0}^\infty  \frac{|2\chi \chi '| }{r} \,\mathrm{d}r
  +  \int_{0}^\infty  \frac{(\chi )^2 }{r^2} \,\mathrm{d}r \right) \,,
\end{eqnarray}
 % -------------- %
where $\eta_1 (\beta )$ behaves asymptotically as $\eta_1 (\beta )= \mathcal{O}(\kappa_0 (\beta ))$.
Our next aim is to estimate the first integral on the right-hand side of (\ref{eq-auxIV}),
 % -------------- %
 $$
   \tilde{q}_1[\chi ] := 2 \int_{0}^\infty  \frac{|\chi \chi '| }{r} \,\mathrm{d}r\,.
 $$
 % -------------- %
Applying the Schwartz inequality together with the classical Hardy inequality,
 % -------------- %
 $$
\int_{0}^\infty   (\chi ')^2 \,\mathrm{d}r
 > \frac{1}{4} \int_{0}^\infty  \frac{  \chi^2 }{r^2} \,\mathrm{d}r\,,
 $$
 % -------------- %
 one obtains
 % -------------- %
 \begin{eqnarray*}
 % \nonumber to remove numbering (before each equation)
  \lefteqn{\tilde{q}_1[\chi ] \leq 2 \left(\int_{0}^\infty  \frac{(\chi )^2 }{r^2} \,\mathrm{d}r \right)^{1/2} \left(\int_{0}^\infty  (\chi ' )^2  \,\mathrm{d}r \right)^{1/2}} \\ &&
    \leq \int_{0}^\infty  \frac{(\chi )^2 }{r^2} \,\mathrm{d}r + \int_{0}^\infty  (\chi ' )^2 \,\mathrm{d}r < 5 \int_{0}^\infty  (\chi ' )^2 \,\mathrm{d}r\,.
 \end{eqnarray*}
 % -------------- %
Applying the Hardy inequality again to (\ref{eq-auxIV}) and combining this with the above result we get
 % -------------- %
 $$
 |\tilde{q}[\chi ] | \leq   \eta (\beta ) \int_{0}^\infty  (\chi ' )^2 \,\mathrm{d}r\,,
 $$
 % -------------- %
where $\eta (\beta ):= 6 \eta_1 (\beta )$. This completes the proof of lemma.
\end{proof}

\medskip

\noindent \emph{ Proof of Theorem~\ref{th-disc}, continued:} As we noted above, for any $\beta $ satisfying  $|\beta |\leq \beta_1 $ we have $\min_{r\ge 0} u(r)\ge C_\mathrm{min}$. Then the above lemma tells us that
 % -------------- %
 \begin{equation}\label{eq-gtilde}
 |\tilde{q}[\chi ] | \leq   \eta (\beta ) \frac{1}{C_\mathrm{min}}\int_{0}^\infty u^2 (\chi ' )^2 \,\mathrm{d}r\,,
 \end{equation}
  % -------------- %
Since by Lemma~\ref{l:est1} we have
  % -------------- %
\begin{equation}\label{eq-auxVI}
     q_{\alpha ;\beta ,0}[u\chi ] >
    \alpha^2 \int_{0}^\infty u^2 \frac{ \chi ^2 }{r^2} \mathrm{d}r+ \tilde{q}[\chi ]\,,
\end{equation}
 % -------------- %
relation (\ref{eq-gtilde}) yields
 % -------------- %
\begin{equation}\label{eq-auxVII}
\left| \tilde{q}[\chi ]\right|\leq \tilde{\eta} (\beta )
\int_{0}^\infty u ^2   (\chi')^2 \mathrm{d}r \quad \text{with}\quad \tilde{\eta} (\beta )
= \mathcal{O}(\kappa_0 (\beta ))\,.
\end{equation}
 % -------------- %
Combining relations (\ref{eq-auxVII}) and (\ref{eq-auxVI}) we get
 % -------------- %
$$
\left(1+\tilde{\eta }(\beta )\right) \left( \int_{0}^\infty\! u ^2   (\chi')^2 \,\mathrm{d}r +  c_{\alpha ,0} \int_{0}^\infty\! u^2 \frac{ \chi ^2 }{r^2}\,\mathrm{d}r \right)
> \left( \alpha^2 +  c_{\alpha , 0} \tilde{\eta }(\beta ) \right)\int_{0}^\infty\! u^2 \frac{ \chi ^2 }{r^2} \,\mathrm{d}r
$$
 % -------------- %
which implies
 % -------------- %
$$ %\begin{equation}\label{eq-conclude}
   q_{\alpha ;\beta ,0}[u\chi ]  >
   \frac{\alpha^2 +  c_{\alpha , 0} \tilde{\eta }(\beta )}
   {1+\tilde{\eta }(\beta )}\: \int_{0}^\infty u^2 \frac{ \chi ^2 }{r^2} \,\mathrm{d}r\,.
$$ %\end{equation}
 % -------------- %
By Lemma~\ref{l:est2} there is a $\beta_0\in (0,\beta_1)$ such that for any $\beta$ satsisfying  $|\beta|\leq \beta_0$ the pre-integral factor in the last formula is positive which means that we have
 % -------------- %
$$
(H_{\alpha; \beta, 0 }f,f )- E_0 \|f\|^2 >0\,
$$
 % -------------- %
for any real function $f\in D(H_{\alpha; \beta, 0 } )$. The same holds \emph{mutatis mutandis} for the full Hamiltonian $H_{\alpha; \beta}$ which completes the proof.
\end{proof}

%%%%%%%%%%%%%%%%%%%%%%%%%%%%%%%%%%%%%%%%%%%%%%%%%
\section{Oscillation theory tools}\label{s:tools}
\setcounter{equation}{0}

To make the paper self-contained, we collect in this section the
needed results of oscillation theory for singular potentials derived
in \cite{EF08}. Note that they extend the theory of Wronskian zeros
for regular potentials developed in \cite{GST}, related results can
also be found in \cite{Weidmann}.

Consider points interaction localized at $x_n \in (l_-, l_+)$, where
$n\in M\in\N$. Moreover, assume that $q\in L^1_{\mathrm{loc} } (l_-,
l_+)$ and combine the singular and regular potential in the operator
on $L^2(l_-, l_+)$ acting as
 % -------------- %
$$
Tu(x)= -u''(x)+q(x)u(x)\,,
$$
 % -------------- %
with the domain
 % -------------- %
 \begin{align*}
D(T)  :=& \{ f, f,\in AC_{\mathrm{loc}} (l_-, l_+) \setminus
\left(\cup_{n\in M}\{x_n\}\right) \,: \, \\ & Tu\in L^2 (l_-,
l_+)\,,\,\, \partial_r f(x_n^+)-\partial_r f(x_n^-) = \beta f(x_n
)\,,\; n\in M \}\,.
 \end{align*}
 % -------------- %
In general, the operator $T$ is symmetric and we denote by $H$ its
self-adjoint extension satisfying either one of the following
conditions
 % -------------- %
\begin{itemize}
\item $T$ is limit point in at least one endpoint $l_\pm$
 % -------------- %
\item  $H $ is defined by separated boundary conditions at the
endpoints
\end{itemize}
 % -------------- %
Suppose that there exist $\psi_\pm $ that satisfy the boundary
conditions defining $H$ at $l_\pm$ and $T\psi_\pm =E\psi_\pm$.
Furthermore, let $W_0 (u_1, u_2)$ stand for the number of zeros of
the Wronskian $W (u_1, u_2)= u_1u_2'-u_1'u_2$ in $(l_-, l_+)$ and
denote $N_0 (E_1, E_2) := \mathrm{dim}\, \mathrm{Ran} P_{(E_1,
E_2)}$, where $E_1 < E_2$ and $P_{(E_1, E_2)}$ is the corresponding
spectral measure of $H$. Then we have \cite{EF08}
 % -------------- %
\begin{equation}\label{eq-wron}
  W_0 (\psi_- (E_1),\psi_+ (E_2) ) = N_0 (E_1, E_2)\,.
\end{equation}
 % -------------- %
In particular, the above equivalence allows us to estimate the
cardinality of the discrete spectrum below the essential spectrum
threshold $E_0$. Indeed, suppose $E<E_0$. Then, in the same way as
for regular potentials, there exist $u=\psi_\pm (E)$ with the
corresponding Pr\"ufer angle $\theta$ bounded for $E$ large
negative. Expressing the Wronskian in the terms of the Pr\"ufer
variables $W[\psi_-(E),\psi_+(E_0) ]= \rho (x)\rho_0(x) \sin
(\theta_0 (x)-\theta (x))$ we come to the conclusion that \emph{the
number of discrete spectrum points of $H$ below $E_0$ is finite
\emph{iff} $\theta_0 (\cdot)$ is bounded.}

%%%%%%%%%%%%%%%%%%%%%%%%%%%%%%%%%%%%%%%%%%%%
\section{Concluding remarks}\label{s: concl}
\setcounter{equation}{0}

The main aim of this letter is to show that the influence of a local magnetic field on the Welsh eigenvalues depends nontrivially on the magnetic flux. In order to make the exposition simple we focused on the simple setting with radial $\delta$ potentials and an Aharonov-Bohm field, however, we are convinced that the conclusions extend to other potentials magnetic field profiles, as long as the radial symmetry and periodicity are preserved. This could be a subject of further investigation, as well as the remaining spectral properties of the present simple model such the eigenvalue accumulation for $\alpha \in (0,\ac )$ or (non)existence of eigenvalues for $\alpha \in [\ac, \frac12 )$ and an arbitrary $\beta\ne 0$. It would be also interesting to revisit from the present point of view situations in which the radially periodic interaction is of a purely magnetic type with zero total flux \cite{H}.

\subsection*{Acknowledgements}

The work was supported by the project 17-01706S of the the Czech Science Foundation (GA\v{C}R) and the project DEC-2013/11/B/ST1/03067 of the Polish National Science Centre (NCN).
%\subsection*{References}

\end{document}